\newcommand{\A}{\mathcal{A}}
\newcommand{\I}{\mathcal{I}}
\renewcommand{\H}{\mathcal{H}}
\newcommand{\W}{\mathcal{W}}
\newcommand{\Z}{\mathbb{Z}}
\renewcommand{\d}{\mathtt{d}}
\newcommand{\h}{\mathtt{h}}
\renewcommand{\r}{\mathtt{r}}
\renewcommand{\u}{\mathtt{u}}
\renewcommand{\S}{\mathbf{S}}
\renewcommand{\leq}{\leqslant}
\renewcommand{\geq}{\geqslant}
\newcommand{\M}{\mathcal{M}}
\renewcommand{\leq}{\leqslant}
\renewcommand{\geq}{\geqslant}
\numberwithin{equation}{section}
\newtheorem{theorem}{Theorem}[section]
\newtheorem{lemma}[theorem]{Lemma}
\newtheorem{corollary}[theorem]{Corollary}
\theoremstyle{definition}
\newtheorem{example}[theorem]{Example}
\begin{document}
\title{Lattice paths inside a table II}

\author[D. Yaqubi]{Daniel Yaqubi}
\address{Department of Pure Mathematics, Ferdowsi University of Mashhad, P. O. Box 1159, Mashhad 91775, Iran}
\email{daniel\_yaqubi@yahoo.es}
\author[M. Farrokhi D. G.]{Mohammad Farrokhi D. G.}
\address{Institute for Advanced Studies in Basic Sciences (IASBS), and the Center for Research in Basic Sciences and Contemporary Technologies, IASBS, P.O.Box 45195-1159, Zanjan 66731-45137, Iran}
\email{m.farrokhi.d.g@gmail.com\\farrokhi@iasbs.ac.ir}

\author[M. Zamani K.]{Mohamad Zamani K.}
\address{Amirkabir University of Technology (Tehran Polytechnique), Tehran, Iran}
\email{m.zamani28@aut.ac.ir}

\keywords{Direct animals, Lattice paths, Dyck paths, Perfect lattice paths, Ballot numbers, Motzkin numbers.}
\subjclass[2010]{Primary 05A15; Secondary 11B37, 11B39.}

\maketitle
\begin{abstract}
Consider an $m\times n$ table $T$ and latices paths $\nu_1,\ldots,\nu_k$ in $T$ such that each step $\nu_{i+1}-\nu_i=(1,1)$, $(1,0)$ or $(1,-1)$. The number of paths from the $(1,i)$-cell (resp. first column) to the $(s,t)$-cell is denoted by $\mathcal{C}^i(s,t)$ (resp. $\mathcal{C}(s,t)$). Also, the number of all paths form the first column to the last column is denoted by $\I_m(n)$. We give explicit formulas for the numbers $\mathcal{C}^1(s,t)$ and $\mathcal{C}(s,t)$.
\end{abstract}
\section{Introduction}
A lattice path in $\Z^2$ is the drawing in $\Z^2$ of a sum of vectors from a fixed finite subset $S$ of $\Z^2$, starting from a given point, say $(0,0)$ of $\Z^2$. A typical problem in lattice paths is the enumeration of all $\S$-lattice paths (lattice paths with respect to the set $\S$) with a given initial and terminal point satisfying possibly some further constraints. A nontrivial simple case is the problem of finding the number of lattice paths starting from the origin $(0,0)$ and ending at a point $(m,n)$ using only right step $(1,0)$ and up step $(0,1)$ (i.e., $\S=\{(1,0),(0,1)\}$). The number of such paths are known to be the the binomial coefficient $\binom{m+n}{n}$. Yet another example, known as the ballot problem, is to find the number of lattice paths from $(1,0)$ to $(m,n)$ with $m>n$, using the same steps as above, that never touch the line $y=x$. The number of such paths, known as ballot number, equals $\frac{m-n}{m+n}\binom{m+n}{n}$. In the special case where $m=n+1$, the ballot number 	is indeed the Catalan number $C_n$.

Let $T_{m,n}$ denote the $m\times n$ table in the plane and $(x,y)$ be the cell in the columns $x$ and row $y$ (and refer to it as the $(x,y)$-cell). The set of lattice paths from the $(i,j)$-cell to the $(s,t)$-cell, with steps belonging to a finite set $\S$, is denoted by $L((i,j)\to (s,t);\S)$, and the number of those paths is denoted by  $L((i,j)\to (s,t);\S)$, where $1\leq i,s\leq m$ and $1\leq j,t\leq n$. We put $|L((i,j)\to (s,t);\S)|= l((i,j)\to (s,t);\S)$ which means the number of all lattice paths from the $(i,j)$-cell to the $(s,t)$-cell.

 Throughout this paper, for the table $T_{m,n}$, we set $\S=\{(1,1),(1,0),(1,-1)\}$, and the corresponding lattice paths starting from the first column and ending at the last column are called \textit{perfect lattice paths}. The number of all perfect lattice paths is denoted by $\I_m(n)$, that is,
\[\I_m(n)=\sum_{i,j=1}^m l((i,j)\to (s,t);\S).\]
The values of $\I_m(n)$ is OEIS sequence \href{https://oeis.org/A081113 }{A081113 } and \href{https://oeis.org/A296449}{A296449}. 

Sometimes it is more convenient to name each step of lattice paths by a letter, and hence every lattice path will be encoded as a \textit{lattice word}. We label the steps of the set $\S=\{(1,1),(1,0),(1,-1)\}$ by letters $\u=(1,1)$, $\r=(1,0)$, and $\d=(1,-1)$; also if $\h$ is a letter of the word $\W$, order or size of $\h$ in $\W$ is the number of times the letter $\h$ appears in the word $\W$ and it is denoted by $|\h|=|\h|_\W$.

In this paper, by using ballot numbers, we give explicit formulas for the numbers  $\mathcal{C}^1(s,t)$ and $\mathcal{C}(s,t)$ where are defined in the section $2$. We closed this paper by calculating the number of perfect lattice paths without restrictions in the table $T$.  
\section{Computing $\I_m(n)$ in special cases}
In this section, we give formulas for the number $\I_m(n)$ in the cases where $n+1\leq m\leq 2n$ and $2n\leq m$. To achieve this goal, we must to recall some further notations from \cite{dy-mfdg-hgz}. Let $T$ be the $m\times n$ table and $\S=\{(1,0),(1,1),(1,-1)\}$. The number of lattice paths from the $(1,i)$-cell to the $(s,t)$-cell is denoted by $\mathcal{C}^i(s,t)$. Indeed, $\mathcal{C}^i(s,t)=l((1,i)\to (s,t);\S)$. Table \ref{Table:1} illustrates the values of $\mathcal{C}^8(s,t)$ for $1\leq s,t\leq8$. 
Also, the number of lattice paths from the first column to the $(s,t)$-cell is denoted by $\mathcal{C}_{m,n}(s,t)$, that is,
\[\mathcal{C}_{m,n}(s,t)=\sum_{i=1}^m\mathcal{C}^i(s,t).\]
To avoid confusion we may use simply notation $\mathcal{C}(s,t)$ for $\mathcal{C}_{m,n}(s,t)$. For the square table $T_{n,n}$, some values of $\mathcal{C}(n,n)$ is $1, 1, 2, 5, 13, 35, 96, 267, 750,\ldots$ are given in the OEIS sequence \href{https://oeis.org/A005773}{A005773}. D. Gouyou-Beauchamps, G. Viennot show that $\mathcal{C}(n,n)$ enumerate the number of directed animals of size n (or directed n-ominoes in standard position) \cite{Vie}.Clearly, $\I_m(n)$ is the number of words $a_1a_2\ldots a_{n-1}a_n$ ($a_i\in\{1,\ldots,m\}$) such that $|a_{i+1}-a_i|\leq1$ for all $i=1,\ldots,n-1$. Figure \ref{figure:1} shows perfect lattice paths in $T_{2,3}$ and the corresponding words, where the $i^{th}$ letter indicates the rows whose $i^{th}$ point of the paths belongs to.

\begin{figure}[h!]
\begin{tikzpicture}[scale=1]
\draw[step=1cm] (0,0) grid (3,2);
\draw[o-stealth] (0.5, 0.5) -- (1.5, 0.5);
\draw[o-stealth] (1.5, 0.5) -- (2.5, 0.5);
\end{tikzpicture}
\begin{tikzpicture}[scale=1]
\draw[step=1cm] (0,0) grid (3,2);
\draw[o-stealth] (0.5, 0.5) -- (1.5, 0.5);
\draw[o-stealth] (1.5, 0.5) -- (2.5, 1.5);
\end{tikzpicture}
\begin{tikzpicture}[scale=1]
\draw[step=1cm] (0,0) grid (3,2);
\draw[o-stealth] (0.5, 0.5) -- (1.5, 1.5);
\draw[o-stealth] (1.5, 1.5) -- (2.5, 0.5);
\end{tikzpicture}
\begin{tikzpicture}[scale=1]
\draw[step=1cm] (0,0) grid (3,2);
\draw[o-stealth] (0.5, 0.5) -- (1.5, 1.5);
\draw[o-stealth] (1.5, 1.5) -- (2.5, 1.5);
\end{tikzpicture}
\\
$111$\hspace{2.55cm}$112$\hspace{2.55cm}$121$\hspace{2.55cm}$122$
\\\ \\
\begin{tikzpicture}[scale=1]
\draw[step=1cm] (0,0) grid (3,2);
\draw[o-stealth] (0.5, 1.5) -- (1.5, 1.5);
\draw[o-stealth] (1.5, 1.5) -- (2.5, 1.5);
\end{tikzpicture}
\begin{tikzpicture}[scale=1]
\draw[step=1cm] (0,0) grid (3,2);
\draw[o-stealth] (0.5, 1.5) -- (1.5, 1.5);
\draw[o-stealth] (1.5, 1.5) -- (2.5, 0.5);
\end{tikzpicture}
\begin{tikzpicture}[scale=1]
\draw[step=1cm] (0,0) grid (3,2);
\draw[o-stealth] (0.5, 1.5) -- (1.5, 0.5);
\draw[o-stealth] (1.5, 0.5) -- (2.5, 1.5);
\end{tikzpicture}
\begin{tikzpicture}[scale=1]
\draw[step=1cm] (0,0) grid (3,2);
\draw[o-stealth] (0.5, 1.5) -- (1.5, 0.5);
\draw[o-stealth] (1.5, 0.5) -- (2.5, 0.5);
\end{tikzpicture}
$222$\hspace{2.55cm}$221$\hspace{2.55cm}$212$\hspace{2.55cm}$211$
\caption{Associated words for lattice paths in $T_{2,3}$}
\label{figure:1}
\end{figure}
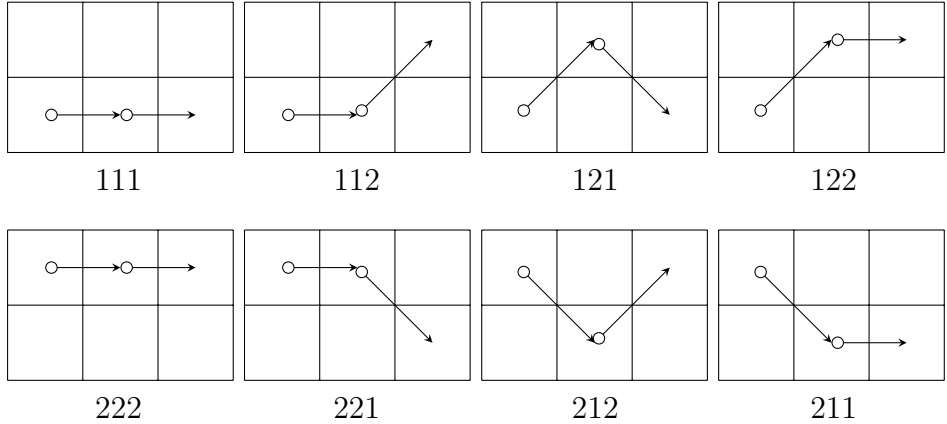 

In what follows, the number of lattice paths from $(1,1)$-cell to $(s,t)$-cell ($1\leq s\leq n$ and $1\leq t\leq m$), using just the two steps $(1,1)$ and $(1,-1)$, is denoted by $\A(s,t)$. In other words, $\A(s,t)=l(1,1;s,t:\S')$, where $\S'=\{(1,1),(1,-1)\}$. Table \ref{Table:1} illustrates the values of $\A(s,t)$ for $1\leq s,t\leq8$. Clearly $\A(s,t)=0$ for $s<t$, and that $\A(s,t)$ is the number lattice paths from the $(1,1)$-cell to $(s,t)$-cell not sliding above the line $y=x$. One observe that $\A(s,t)=0$ if $s,t$ have distinct parities as the paths counted by $\A(s,t)$ begins from $(1,1)$ and every step in $\S'$ keeps the parities of entries so that such paths never meet $(s,t)$-cells with $(s,t)$ having distinct parities. Using the symbols $\u$ and $\d$, the number $\A(s,t)$ counts the words of length $s-1$ on $\{\u,\d\}$ whose all initial subwords have more or equal $\u$ than $\mathcal{C}$. For example, Table \ref{Table:1} tells us $\A(6,1)=5$ and the corresponding five words are
\[\u\u\u\d\d,\ \u\u\d\u\d,\ \u\d\u\d\u,\ \u\u\d\d\u,\ \u\d\u\u\d.\]
Analogous to $\A(s,t)$, the number $\mathcal{C}^1(s,t)$ counts the words $a_1a_2\ldots a_i$ with $1\leq a_i\leq t$ such that $|a_{i+1}-a_i|\leq 1$ for all $1\leq i\leq s-1$. In other words, $\mathcal{C}^1(s,t)$ counts the number of words of length $s-1$ on $\{\u,\r,\d\}$ whose all initial subwords have more or equal $\u$ than $\d$. For example, Table \ref{Table:1} shows that $\mathcal{C}^1(4,1)=5$, and the corresponding five words are
\[\u\u\d,\ \u\r\r,\ \r\r\u,\ \u\d\u,\ \r\u\r.\]
\begin{table}[h!]
\footnotesize{
\begin{center}
\begin{tabular}{|c|c|c|c|c|c|c|c|}
\hline
&&&&&&&$1$\\
\hline 
&&&&&&$1$&$7$\\
\hline
&&&&&$1$&$6$&$27$\\
\hline 
&&&&$1$&$5$&$20$&$70$\\
\hline
&&&$1$&$4$&$14$&$44$&$133$\\
\hline
&&$1$&$3$&$9$&$25$&$69$&$189$\\
\hline
&$1$&$2$&$5$&$12$&$30$&$76$&$196$\\
\hline
$1$&$1$&$2$&$4$&$9$&$21$&$51$&$127$\\
\hline
\end{tabular}
\begin{tabular}{|c|c|c|c|c|c|c|c|}
\hline
&&&&&&&$1$\\
\hline 
&&&&&&$1$&$0$\\
\hline
&&&&&$1$&$0$&$6$\\
\hline 
&&&&$1$&$0$&$5$&$0$\\
\hline
&&&$1$&$0$&$4$&$0$&$14$\\
\hline
&&$1$&$0$&$3$&$0$&$9$&$0$\\
\hline
&$1$&$0$&$2$&$0$&$5$&$0$&$14$\\
\hline
$1$&$0$&$1$&$0$&$2$&$0$&$5$&$0$\\
\hline
\end{tabular}
\end{center}}
\caption{Values of $\mathcal{C}^1(s,t)$ (left), and values of $\A(s,t)$ (right)}
\label{Table:1}
\end{table}
\begin{theorem}\label{D^1(s,t) by A(x,y)}
For all $1\leq s,t\leq m$, we have
\[\mathcal{C}^1(s,t)=\sum_{i=0}^{\lfloor\frac{s-t}{2}\rfloor}\binom{s-1}{s-t-2i}\A(t+2i,t).\]
\end{theorem}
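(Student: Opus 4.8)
The plan is to interpret both sides combinatorially and to build an explicit decomposition of the words counted on the left. By the discussion preceding the statement, $\D^1(s,t)$ is the number of words $w$ of length $s-1$ on the alphabet $\{\u,\r,\d\}$ that start at height $1$, end at height $t$, and whose every initial subword contains at least as many $\u$'s as $\d$'s. The central idea is to peel off the flat steps: given such a $w$, I would record the set of positions occupied by its $\r$-letters, and then delete those letters to obtain a word $w'$ on the two-letter alphabet $\{\u,\d\}$.

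First I would check that this operation lands in the family counted by $\A$. Since an $\r$-step leaves the height unchanged, the height profile of $w'$ is precisely the subsequence of the height profile of $w$ read at its non-flat steps; consequently the running difference (number of $\u$'s minus number of $\d$'s) along $w'$ is a subsequence of the corresponding difference along $w$, and the two attain the same minimum. Hence $w'$ again satisfies the ballot condition, starts at height $1$, and ends at height $t$, so if $w'$ has length $k$ it is counted by $\A(k+1,t)$. The reverse direction is immediate: $w$ is reconstructed uniquely from the pair consisting of $w'$ together with the chosen positions of the $\r$-letters, so $w\mapsto(w',\text{positions})$ is a bijection onto its image.

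It then remains to organize the count by $k$. A $\u/\d$ word of length $k$ running from height $1$ to height $t$ has $p$ up-steps and $q$ down-steps with $p+q=k$ and $p-q=t-1$; writing $q=i$ gives $k=t-1+2i$, so $w'$ is counted by $\A(t+2i,t)$ and the number of flat steps equals $(s-1)-k=s-t-2i$. For fixed $i$, the number of ways to interleave these $s-t-2i$ flat steps among the $s-1$ available positions is $\binom{s-1}{s-t-2i}$. Summing over the admissible range $0\leq i\leq\lfloor(s-t)/2\rfloor$, which is exactly the range forced by $k\leq s-1$, recovers the claimed identity.

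The step I expect to be the main obstacle is the transfer of the constraint verified in the second paragraph: one must argue carefully that deleting the flat steps neither creates nor destroys a violation of the ballot condition, so that the staying-above-$1$ requirement on $w$ is genuinely equivalent to the same requirement on $w'$. Once this equivalence is pinned down, the remaining bookkeeping—the parity of $k$, the binomial factor for reinserting the flat steps, and the summation range—is routine.
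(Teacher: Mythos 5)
Your proof is correct and takes essentially the same route as the paper's: both delete the flat steps $\r$ from a path counted by $\D^1(s,t)$ to obtain a $\{\u,\d\}$-path counted by $\A(t+2i,t)$, where $i$ is the number of down-steps, and then reinsert the $s-t-2i$ flat steps in $\binom{s-1}{s-t-2i}$ ways. Your explicit check that deleting flat steps preserves the ballot condition (since the partial sums of the reduced word run through the same values) is a detail the paper leaves implicit, but the decomposition and the bookkeeping are identical.
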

\begin{proof}
Let $P$ be a lattice path starting from the $(1,1)$-cell and ending at the $(s,t)$ with three steps $\u,\r,\d$ that never slides above the line $y=x$. Clearly, $|\u|_P-|\d|_P=t-1$. Since the number of steps to reach the $(s,t)$-cell is $s-1$ steps, we must have 
\[|\r|_P=s-1-|\u|_P-|\d|_P=s-t-2|\d|_P.\]
Omitting all the $\r$ steps from $P$ yields a path $P'$ from $(1,1)$-cell to the $(t+2|\d|_P,t)$-cell using only $\u$ and $\d$ steps. For any such a path $P'$ one can recover 
\[\binom{|\u|_{P'}+|\d|_{P'}+s-t-2|\d|_{P'}}{\underbrace{s-t-2|\d|_{P'}}_{|\r|_P}}=\binom{s-1}{s-t-2|\d|_{P'}}\]
paths $P$ by inserting right steps $\r$ among $\u$ and $\d$ steps of $P'$, from which the result follows.
\end{proof}
\begin{example}
From Table \ref{Table:1} we read $\mathcal{C}^1(8,4)=133$. Using theorem \ref{D^1(s,t) by A(x,y)}, we can compute $\mathcal{C}^1(8,4)$ alternately as
\begin{eqnarray*}
\mathcal{C}^1(8,4)&=&\sum_{i=0}^{\lfloor\frac{8-4}{2}\rfloor}\binom{8-1}{8-4-2i}\A(2i+4,4)\\
&=&{7\mathcal{C}hoose 4}\A(4,4)+{7\mathcal{C}hoose 2}\A(6,4)+{7\mathcal{C}hoose 0}\A(8,4)\\
&=&35\times 1+21\times 4+1\times 14=133.
\end{eqnarray*}
\end{example}

The numbers $\A(s,t)$ are indeed computed as in the ballot problem were the paths can touch the $y=x$ line but never go above it. The number of such ballot paths from $(1,0)$ to $(m,n)$ is $\frac{m-n+1}{m+1}\binom{m+n}{m}$. Recall that $\A(s,t)$ is the number of words $\W$ of length $s-1$ on $\{\u,\d\}$ with more or equal $u$ than $d$ in any initial subword, hence $\A(s,t)$ is equal to the above number with $m:=|\u|_\W$ and $n:=|\d|_\W$. Now since $|\u|_\W+|\d|_W=s-1$ and $|\u|_\W-|\d|_\W=t-1$, it follows that $m=(s+t)/2-1$ and $n=(s-t)/2$. Hence we obtain the following
\begin{lemma}\label{A}
Inside the $n\times n$ table, we have 
\[\A(s,t)=\frac{2t}{s+t}\binom{s-1}{\frac{s-t}{2}}.\]
for all $1\leq s,t\leq n$.
\end{lemma}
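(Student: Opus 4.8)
The plan is to read the formula directly off the weak ballot number $\frac{m-n+1}{m+1}\binom{m+n}{m}$ recalled just above the statement, after translating the word-count defining $\A(s,t)$ into ballot-path parameters. First I would use the description already established: $\A(s,t)$ counts the words $\W$ of length $s-1$ on $\{\u,\d\}$ in which every initial subword has at least as many $\u$'s as $\d$'s. Setting $m=|\u|_\W$ and $n=|\d|_\W$, these are precisely the ballot sequences enumerated by the quoted formula. The relevant version is the one allowing the path to \emph{touch} the diagonal but not cross it, since the defining condition is ``more or equal'' ($\geq$) rather than a strict inequality; this is exactly the variant whose count is $\frac{m-n+1}{m+1}\binom{m+n}{m}$.

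Next I would pin down $m$ and $n$ in terms of $s$ and $t$ by solving the two linear relations $m+n=s-1$ (the total number of steps needed to advance from column $1$ to column $s$) and $m-n=t-1$ (the net vertical displacement from row $1$ to row $t$). This gives $m=\frac{s+t}{2}-1$ and $n=\frac{s-t}{2}$, which are nonnegative integers exactly when $s\geq t$ and $s\equiv t\pmod 2$; this matches the vanishing of $\A(s,t)$ outside that range noted earlier, so the formula and its support are consistent. Substituting, one finds $m-n+1=t$, $m+1=\frac{s+t}{2}$ and $m+n=s-1$, while the binomial symmetry rewrites $\binom{m+n}{m}=\binom{s-1}{\frac{s+t}{2}-1}=\binom{s-1}{\frac{s-t}{2}}$. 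Combining these pieces yields $\frac{m-n+1}{m+1}\binom{m+n}{m}=\frac{t}{(s+t)/2}\binom{s-1}{(s-t)/2}=\frac{2t}{s+t}\binom{s-1}{(s-t)/2}$, which is the asserted value.

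Since the ballot number is taken as given, there is no genuine combinatorial obstacle remaining; the only points requiring care are (i) matching the tie-allowing ballot count to the $\geq$ condition in the definition of $\A$, rather than the strict-inequality count $\frac{m-n}{m+n}\binom{m+n}{m}$, and (ii) tracking the two $-1$ shifts that arise because the blanks are indexed from $1$ instead of $0$. I would guard against an off-by-one slip by checking the substitution against a tabulated entry, for instance $\A(8,4)$, where the formula returns $\frac{8}{12}\binom{7}{2}=14$, in agreement with Table \ref{Table:1}.
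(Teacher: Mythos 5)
Your proposal is correct and follows essentially the same route as the paper: both identify $\A(s,t)$ with the weak (tie-allowing) ballot count $\frac{m-n+1}{m+1}\binom{m+n}{m}$, solve $m+n=s-1$, $m-n=t-1$ to get $m=\frac{s+t}{2}-1$, $n=\frac{s-t}{2}$, and substitute to obtain $\frac{2t}{s+t}\binom{s-1}{(s-t)/2}$. Your version is in fact slightly more careful than the paper's, since you explicitly note the binomial symmetry step, the parity/support condition, and a numerical check against Table \ref{Table:1}.
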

\begin{corollary}
Inside the $n\times n$ table, we have 
\[\mathcal{C}^1(s,t)=\sum_{i=0}^{\lfloor\frac{s-t}{2}\rfloor}\frac{t}{t+i}\binom{s-1}{s-t-2i}\binom{t+2i-1}{i}.\]
for all $1\leq s,t\leq n$.
\end{corollary}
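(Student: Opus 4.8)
The plan is to obtain the corollary by direct substitution of the closed form from Lemma~\ref{A} into the summation formula of Theorem~\ref{D^1(s,t) by A(x,y)}. The theorem already expresses $\D^1(s,t)$ as a sum over $i$ of terms $\binom{s-1}{s-t-2i}\A(t+2i,t)$, and since we are now working inside the $n\times n$ table, each factor $\A(t+2i,t)$ can be replaced by the explicit ballot-type expression supplied by Lemma~\ref{A}.

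First I would evaluate $\A(t+2i,t)$ by setting the first argument of Lemma~\ref{A} equal to $t+2i$ and the second equal to $t$. The key point is how the terms simplify: the denominator $s+t$ becomes $(t+2i)+t=2(t+i)$, which combines with the leading factor $2t$ to give $\tfrac{t}{t+i}$, while the lower index of the binomial coefficient becomes $\tfrac{(t+2i)-t}{2}=i$. Thus
\[\A(t+2i,t)=\frac{2t}{2(t+i)}\binom{t+2i-1}{i}=\frac{t}{t+i}\binom{t+2i-1}{i}.\]

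Next I would substitute this value of $\A(t+2i,t)$ into the sum of Theorem~\ref{D^1(s,t) by A(x,y)}, leaving the factor $\binom{s-1}{s-t-2i}$ untouched, to arrive immediately at
\[\D^1(s,t)=\sum_{i=0}^{\lfloor\frac{s-t}{2}\rfloor}\frac{t}{t+i}\binom{s-1}{s-t-2i}\binom{t+2i-1}{i},\]
which is exactly the claimed identity.

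Since both ingredients are already established, there is no genuine obstacle here; the only point requiring a little care is to confirm that the summation range in Theorem~\ref{D^1(s,t) by A(x,y)} lies within the range where Lemma~\ref{A} applies inside the $n\times n$ table. For each index $i$ in the sum one has $t\leq t+2i\leq s\leq n$, so the arguments $(t+2i,t)$ always sit inside the table and satisfy $t+2i\geq t$, which guarantees that Lemma~\ref{A} is valid term by term. This justifies the substitution for all $1\leq s,t\leq n$ and completes the argument.
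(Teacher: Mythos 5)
Your proposal is correct and is exactly the argument the paper intends: the corollary follows by substituting the ballot-number formula of Lemma~\ref{A} (with first argument $t+2i$) into the sum of Theorem~\ref{D^1(s,t) by A(x,y)}, and your simplification $\A(t+2i,t)=\frac{t}{t+i}\binom{t+2i-1}{i}$ is accurate. Your additional check that $t+2i\leq s\leq n$ keeps every term inside the table is a sensible bit of care that the paper leaves implicit.
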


In \cite{dy-mfdg-hgz}, we have computed the number $\I_n(n)$ for all $n\geq1$. In what follows, we shall give formulas for $\I_m(n)$, where $n+1\leq m\leq 2n$.
To achieve this goal, we use the numbers $\H(s,t)$ inside the $m\times n$ table defined as
\[\H(s,t)=\sum_{i=1}^t\mathcal{C}^1(s,i),\]
where $1\leq s\leq n$ and $1\leq t\leq m$. Table \ref{Table:2} illustrates some values of $\H(s,s)$. One observe that $\H(s,s)=\mathcal{C}(s,s)$ for all $s\leq m=5$.

\begin{table}[h!]
\centering
\begin{tabular}{|c|c|c|c|c|c|c|c|c|c|c|}
\hline
&&&&&$1$&$5$&$19$&$63$&$195$&$579$\\
\hline
&&&&$1$&$4$&$14$&$44$&$133$&$384$&$1096$\\
\hline
&&&$1$&$3$&$9$&$25$&$69$&$189$&$517$&$1413$\\
\hline
&&$1$&$2$&$5$&$12$&$30$&$76$&$196$&$512$&$1352$\\
\hline
&$1$&$1$&$2$&$4$&$9$&$21$&$51$&$127$&$323$&$835$\\
\hline\hline
$\H(s,s)$&$1$&$2$&$5$&$13$&$36$&$95$&$259$&$708$&$1931$&$5275$\\
\hline
\end{tabular}
\caption{Some values of $\H(s,s)$ for $T_{5,10}$}
\label{Table:2}
\end{table} 
\begin{lemma}\label{H(n,m) by D^1(i,m)}
Inside the $m\times n$ table with $m\leq n\leq 2m$, we have
\[\H(n,m)=\mathcal{C}(n,n)-\sum_{i=m}^{n-1}3^{n-i-1}\mathcal{C}^1(i,m).\]
\end{lemma}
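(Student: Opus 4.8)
The plan is to read both sides as counts of the same family of words and to interpret their difference through a first-passage decomposition. Recall that $\D^1(s,t)$ counts the words of length $s-1$ on $\{\u,\r,\d\}$ whose path starts at row $1$, never drops below row $1$, stays at or below row $m$ (the ceiling imposed by the $m\times n$ table), and ends at row $t$. Summing over all admissible end-heights, $\H(n,m)=\sum_{i=1}^m\D^1(n,i)$ is exactly the number of length-$(n-1)$ words from row $1$ that remain inside the strip $1\le y\le m$. On the other hand, by reversing a path and reflecting the rows through the middle of the $n\times n$ table — the same reflection that underlies the observation $\H(s,s)=\D(s,s)$ — one checks that $\D(n,n)=\sum_{i=1}^n\D^i(n,n)$ equals $\sum_{j=1}^n\D^1(n,j)$ inside the $n\times n$ table; that is, $\D(n,n)$ counts the length-$(n-1)$ words from row $1$ that stay at or above row $1$ but may climb as high as row $n$. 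Since a path of length $n-1$ from row $1$ can never exceed row $n$, the ceiling $y\le n$ is vacuous, and $\D(n,n)$ is simply the number of \emph{all} length-$(n-1)$ words from row $1$ that never go below row $1$.

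Consequently $\D(n,n)-\H(n,m)$ counts precisely those length-$(n-1)$ words from row $1$ that stay $\ge1$ but break the ceiling $y\le m$, i.e. reach row $m+1$ at some step. First I would group these words by the column $i+1$ at which row $m+1$ is first attained. Just before that step the path sits at row $m$, so its initial segment is a word of length $i-1$ from $(1,1)$ to $(i,m)$ that never leaves the strip $[1,m]$ — there are exactly $\D^1(i,m)$ of these — followed by a forced up-step $\u$. The tail consists of the remaining $n-1-i$ letters, which I claim are completely unconstrained, contributing a factor $3^{\,n-1-i}$. The break column must satisfy $m\le i\le n-1$, since one needs $i\ge m$ to have reached row $m$ and $i\le n-1$ to have a step left, so summing produces $\sum_{i=m}^{n-1}3^{n-i-1}\D^1(i,m)$, as required.

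The crux — and the only place the hypothesis $n\le 2m$ enters — is verifying that the tail is genuinely free. After the first break the path stands at row $m+1$, and only $n-1-i\le n-1-m\le m-1$ letters remain; even if every one of them were a down-step, the row could fall no lower than $(m+1)-(m-1)=2\ge1$, so the lower wall $y\ge1$ is never threatened, while the upper wall is absent for $\D(n,n)$. Hence each of the $3^{\,n-1-i}$ choices for the tail yields an admissible word and the decomposition is a genuine bijection. The two technical points I would be most careful about are the reflection identity pinning down $\D(n,n)$ as the unrestricted strip count, so that both sides are tallied under compatible conventions, and checking that the initial-segment condition matches $\D^1(i,m)$ on the nose, with no over-counting arising from earlier, non-fatal visits of the prefix to row $m$.
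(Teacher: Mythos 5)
Your proof is correct and follows essentially the same route as the paper's: identify $\D(n,n)=\H(n,n)$ with the count of all length-$(n-1)$ paths from row $1$ that never drop below row $1$ (the ceiling being vacuous), then subtract the paths that breach row $m+1$, decomposed at the first breach into a $\D^1(i,m)$ prefix, a forced up-step, and a free tail contributing $3^{n-i-1}$. If anything, you are more careful than the paper, which asserts the free-tail factor without justification; your check that $n\le 2m$ keeps the tail above the lower wall is precisely the point where that hypothesis is needed.
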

\begin{proof}
Consider the $m\times n$ table $T$ as the subtable of the $n\times n$ table $T'$ with $T$ in the bottom. We know that $\mathcal{C}(n,n)=\H(n,n)$ is the number of all perfect lattice paths from the $(1,1)$-cell to the last columns. However, some lattice paths leave $T$ in rows $m+1,m+2,\ldots n$ of $T'$. We shall count the number of such paths. Consider the column $i$ a path starting from $(1,1)$ left the table $T$ for the first times. Clearly, $m+1\leq i\leq n$. The number of such paths is $\mathcal{C}^1(i-1,m)$ and the number of paths starting from $(i,m+1)$-cell and ending at the last column in simply $3^{n-i}$. Thus the number of such paths leaving $T$ is equal to
\[\sum_{i=m+1}^n3^{n-i}\mathcal{C}^1(i-1,m)=\sum_{i=m}^{n-1}3^{n-i-1}\mathcal{C}^1(i,m),\]
from which the result follows.
\end{proof}
\begin{example}
Using Lemma \ref{H(n,m) by D^1(i,m)}, we can calculate $\H(9,5)$ as
\begin{align*}
\H(9,5)=&\mathcal{C}(9,9)-\sum_{i=5}^{8}3^{8-i}\mathcal{C}^1(i,5)\\
=&2123-\big(3^{8-5}\mathcal{C}^1(5,5)+3^{8-6}\mathcal{C}^1(6,5)\\
&\quad\quad\quad\quad\quad\quad+3^{8-7}\mathcal{C}^1(7,5)+3^{8-8}\mathcal{C}^1(8,5)\big)\\
=&2123-\big(27\times 1+9\times 5+3\times 19+1\times 63)=1931.
\end{align*}
\end{example}
\begin{lemma}\label{D^1(s,m) by D^1(m,i)D^1(s-m+1,m-i+1)}
Inside the $m\times n$ table, we have
\[\mathcal{C}^1(n,m)=\sum_{i=1}^m\mathcal{C}^1(s,i)\times\mathcal{C}^1(n-s+1,m-i+1).\]
for all $1\leq s\leq n$.
\end{lemma}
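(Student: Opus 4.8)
The plan is to read both sides as counts of lattice paths confined to the $m$ rows of the table and to establish the identity bijectively by cutting a path at column $s$. Recall that inside $T_{m,n}$ the number $\D^1(a,b)$ counts the paths with steps $\u,\r,\d$ from the $(1,1)$-blank to the $(a,b)$-blank; equivalently, writing $r_1=1,r_2,\dots,r_a=b$ for the successive rows visited, these are the sequences with $r_1=1$, $r_a=b$, $1\le r_j\le m$ and $|r_{j+1}-r_j|\le 1$. The essential point, which makes the present statement differ from the \emph{free} count of Theorem \ref{D^1(s,t) by A(x,y)}, is that here the upper wall $r_j\le m$ is genuinely active: the path counted by $\D^1(n,m)$ has length $n-1$ and must reach the top row $m$, so for $n>m$ it may press against the ceiling.

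First I would fix a path $P$ counted by $\D^1(n,m)$ and record the row $i$ of the blank it occupies in column $s$; since $P$ lives in the table, $1\le i\le m$. Cutting $P$ at that blank writes it uniquely as a head $P_1$, running through columns $1,\dots,s$ from $(1,1)$ to $(s,i)$, followed by a tail $P_2$, running through columns $s,\dots,n$ from $(s,i)$ to $(n,m)$. The heads are exactly the paths counted by $\D^1(s,i)$, so grouping all $P$ according to the junction row $i$ gives
\[\D^1(n,m)=\sum_{i=1}^m \D^1(s,i)\cdot N_i,\]
where $N_i$ is the number of tails from $(s,i)$ to $(n,m)$ inside the table. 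It then remains to identify $N_i$ with $\D^1(n-s+1,m-i+1)$.

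The main step, and the one I expect to be the real obstacle, is this tail count: $P_2$ starts at the arbitrary row $i$ rather than at row $1$, so it is not directly a $\D^1$-path, and a naive translation to the bottom would violate the floor $r\ge 1$. The remedy is to \emph{reverse} $P_2$ and \emph{reflect} its rows through the middle of the table. Explicitly, if $P_2$ visits rows $r_s=i,r_{s+1},\dots,r_n=m$, I set $\rho_c=m+1-r_{n-c+1}$ for $c=1,\dots,n-s+1$. Then $\rho_1=m+1-r_n=1$ and $\rho_{n-s+1}=m+1-r_s=m-i+1$, while $\rho_{c+1}-\rho_c=r_{n-c+1}-r_{n-c}$ gives $|\rho_{c+1}-\rho_c|\le 1$, and $1\le r_j\le m$ forces $1\le\rho_c\le m$. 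Hence $P_2\mapsto(\rho_c)_c$ is a bijection from the tails onto the paths from $(1,1)$ to $(n-s+1,m-i+1)$ inside the table, that is, $N_i=\D^1(n-s+1,m-i+1)$.

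The one thing that must be verified is that this reversal-and-reflection keeps the path inside the $m\times n$ table, which is precisely why both walls $1\le r\le m$ are needed: the reflection $r\mapsto m+1-r$ maps the strip of rows $\{1,\dots,m\}$ onto itself, and the increment computation above shows that every step of the transformed path still belongs to $\{\u,\r,\d\}$. Substituting $N_i=\D^1(n-s+1,m-i+1)$ into the displayed sum yields the claim. The degenerate choices $s=1$ and $s=n$ collapse the sum to the single surviving term (forcing $i=1$ and $i=m$ respectively), and the vanishing of $\D^1(n-s+1,m-i+1)$ when $n-s<m-i$ matches the impossibility of climbing from row $i$ to row $m$ in $n-s$ steps, so these cases require no separate treatment.
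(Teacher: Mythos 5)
Your proof is correct and follows essentially the same route as the paper: both decompose a path counted by $\D^1(n,m)$ at the row $i$ where it crosses column $s$, count the heads by $\D^1(s,i)$, and identify the tails with $\D^1(n-s+1,m-i+1)$ via reversal. The only difference is that the paper merely says the tails are paths from $(n,m)$ to $(s,i)$ ``in reversed direction,'' while you make explicit the needed reversal-plus-reflection $\rho_c=m+1-r_{n-c+1}$ and verify it is a step-preserving bijection respecting both walls of the table--a detail the paper leaves implicit.
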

\begin{proof}
Every path from the $(1,1)$-cell to $(n,m)$-cell crosses the $s^{th}$ column at some row, say $i$. The number of such paths equals the number $\mathcal{C}^1(s,i)$ of paths from the $(1,1)$-cell to $(s,i)$-cell multiplied by the number $\mathcal{C}^1(n-s+1,m-i+1)$ of paths from the $(n,m)$-cell to $(s,i)$-cell (in reversed direction). The result follows.
\end{proof}

\begin{example}
Table \ref{Table:1} shows that $\mathcal{C}^1(9,5)=195$. Lemma \ref{D^1(s,m) by D^1(m,i)D^1(s-m+1,m-i+1)} gives an alternate way to compute $\mathcal{C}^1(9,5)$ as in the following:
\begin{align*}
\mathcal{C}^1(9,5)=&\sum_{i=1}^5\mathcal{C}^1(5,i)\mathcal{C}^1(9-5+1,5-i+1)\\
=&\mathcal{C}^1(5,1)\mathcal{C}^1(5,5)+\mathcal{C}^1(5,2)\mathcal{C}^1(5,4)+\mathcal{C}^1(5,3)\mathcal{C}^1(5,3)\\
&+\mathcal{C}^1(5,4)\mathcal{C}^1(5,2)+\mathcal{C}^1(5,5)\mathcal{C}^1(5,1)\\
=&9\times 1+12\times 4+9\times 9+4\times 12+1\times 9=195.
\end{align*}
\end{example}

\begin{lemma}\label{D(s,t) by D^1(i,t) and D^1(i,m+1-t)}
Inside the $m\times n$ table, we have
\[\mathcal{C}(s,t)=3^{s-1}-\sum_{i=t+1}^{s-1}3^{s-i-1}\mathcal{C}^1(i,t)-\sum_{i=m+2-t}^{s-1}3^{s-i-1}\mathcal{C}^1(i,m+1-t)\]
for all $s\leq n+2$.
\end{lemma}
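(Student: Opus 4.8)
The plan is to compute $\D(s,t)$ by a complementary (sieve) count on reversed walks. Reading every path backwards, $\D(s,t)$ equals the number of walks of length $s-1$ that start at the $(s,t)$-blank, move one column to the left at each step while changing the row by $+1$, $0$ or $-1$ (the reverses of $\u,\r,\d$), stay inside rows $1,\dots,m$, and terminate anywhere in the first column. If we drop the requirement of staying inside the table, each of the $s-1$ steps is free and there are exactly $3^{s-1}$ such walks; this is the leading term. From it I must subtract the walks that leave the strip $\{1,\dots,m\}$, either through the bottom (reaching row $0$) or through the top (reaching row $m+1$). The role of the hypothesis on $s$ is to keep the walks short enough that a single walk cannot reach both row $0$ and row $m+1$, so the two exit events are disjoint and a plain subtraction (with no inclusion--exclusion correction term) is valid.

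Next I would count the bottom-exiters by a first-passage decomposition. For such a walk, let $i$ be the number of steps after which it first reaches row $0$, so that this first passage occurs in column $s-i$. During the preceding $i-1$ steps the walk stays at row $\ge 1$ and runs from row $t$ down to row $1$; the $i$-th step is a forced drop to row $0$, after which the remaining $s-i-1$ steps are completely unconstrained, contributing a factor $3^{s-i-1}$. Reversing the initial segment and using the description of $\D^1$ recalled before Table~\ref{Table:1} (words on $\{\u,\r,\d\}$ of the given length whose initial subwords never have more $\d$ than $\u$), that segment is counted precisely by $\D^1(i,t)$. Summing over the admissible first-passage indices then produces $\sum_i 3^{s-i-1}\D^1(i,t)$, the first subtracted sum, the range of $i$ being dictated by when $\D^1(i,t)$ is nonzero and when the free tail of length $s-i-1$ exists.

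The top-exiters are handled by the order-two symmetry $r\mapsto m+1-r$ of the strip $\{1,\dots,m\}$, which interchanges $\u$ and $\d$, fixes $\r$, swaps the bottom boundary (row $0$) with the top boundary (row $m+1$), and sends the target row $t$ to $m+1-t$. Applying the previous paragraph to this reflected configuration shows that the top-exiters number $\sum_i 3^{s-i-1}\D^1(i,m+1-t)$, which is the second subtracted sum; combining the three contributions yields the claimed identity. The main obstacle, and the only place real care is required, is the disjointness of the two exit events: one must verify both that no admissible walk escapes through the top and the bottom (so that no term is added back) and, for the very same reason, that the first-passage initial segments never silently cross the opposite boundary, so that they are genuinely enumerated by $\D^1$. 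Both facts rest on the length bound built into the hypothesis $s\le n+2$, and checking that this bound indeed forces the cross term to vanish for the relevant values of $t$ is the delicate point of the argument.
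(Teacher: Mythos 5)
Your strategy is the same sieve the paper itself uses: reverse the walks, count the $3^{s-1}$ unconstrained ones, and subtract the escapers, splitting them into bottom and top classes and decomposing each at the first boundary passage in reversed time (which is exactly the paper's ``last time the path leaves $T$'' read forwards), with the reflection $r\mapsto m+1-r$ handling the top. The proof nevertheless fails at its final step, the identification of your sums with the ones in the statement. By your own prescription, the admissible indices in the bottom sum are those with $\D^1(i,t)\neq 0$ and $s-i-1\geq 0$, i.e.\ $t\leq i\leq s-1$, whereas the statement's sum starts at $i=t+1$. The term $i=t$ is not vacuous: $\D^1(t,t)=1$, and it counts the $3^{s-t-1}$ escapers whose reversed walk descends monotonically from row $t$ to row $1$ and then drops to row $0$, after which it is free. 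Likewise the top sum should start at $i=m+1-t$, not $m+2-t$. So what your argument actually establishes is
\[\D(s,t)=3^{s-1}-\sum_{i=t}^{s-1}3^{s-i-1}\D^1(i,t)-\sum_{i=m+1-t}^{s-1}3^{s-i-1}\D^1(i,m+1-t),\]
which differs from the stated identity --- and the stated identity is in fact false, so the mismatch cannot be talked away. For $m=2$, $n=3$, $s=3$, $t=1$ one has $\D(3,1)=4$, while the stated right-hand side equals $3^{2}-3^{0}\D^1(2,1)=9-1=8$. (The paper's own proof contains the identical off-by-one: it lets the column of the last exit run over $1,\ldots,s-t-1$, when it can equal $s-t$.)

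Two smaller points about the hypothesis you lean on. As printed, $s\leq n+2$ is vacuous, since $s$ is a column index and hence $s\leq n$ automatically; what your disjointness argument actually needs is $s\leq m+2$, for then a walk of $s-1\leq m+1$ steps starting in rows $1,\dots,m$ cannot visit both row $0$ and row $m+1$ --- presumably that is the intended reading. More importantly, the disjointness can be dispensed with altogether: classify an escaping walk by its first touch, in reversed time, of \emph{either} boundary row. The two classes are then disjoint by definition, the segment before that touch is automatically confined to rows $1,\dots,m$ (so it really is counted by $\D^1$, with no silent crossing of the opposite boundary to worry about), and the free tail contributes $3^{s-i-1}$ regardless of what it does afterwards. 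With this framing the corrected identity displayed above holds for every $s$ with no hypothesis at all; this is in effect what the paper's decomposition does, and it removes entirely what you called the delicate point of the argument.
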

\begin{proof}
Allowing the paths leak out of the $m\times n$ table $T$, the number of all perfect lattice paths from the first column to the $(s,t)$-cell is $3^{s-1}$ minus those paths leaving $T$ at some step. Suppose a path leaves $T$ for the last times at $(0,i)$-cell. Then $i=1,\ldots,s-t-1$ and the number of such paths equals $3^{i-1}\mathcal{C}^1(s-i,t)$. Analogously, the number of paths leaving $T$ for the last time at $(i,m+1)$-cell is $3^{i-1}\mathcal{C}^1(s-i,m+1-t)$, and that $0\leq i\leq s-(m+1-t)$. Hence the result follows by changing $i$ to $s-i$.
\end{proof}
In the \cite{dy-mfdg-hgz} the authors obtained the following relation for $\mathcal{C}_n$ in terms of Motzkin numbers $M_n$.
\begin{lemma}\label{D(s,1)}
Inside the square $n\times n$ table we have
\[\mathcal{C}_n=3\mathcal{C}_{n-1}-\M_{n-2}.\]
\end{lemma}
Utilizing the above recurrence relation, we prove the following theorem.
\begin{theorem}
Let $T$ be a $n\times n$ table. Then
\[\mathcal{C}_{n-1}=\sum_{i=3}^{n+}M_{i-3}\mathcal{C}_{n-i+1},\]
where $M_i$ is the $i^{th}$ -Motzkin number.
\end{theorem}\label{C(n,n)}
\begin{proof}
$\mathcal{C}_n$ is the number of perfect lattice paths from the cell $(1,1)$ to $(n,n)$. Consider $P_n\in \mathcal{C}_n$ is the paths of length $n$ in the following table. If the first step of $P_n$ is $(1,0)$ , the number of perfect lattice paths from this cell to the cell $(n-1,n-1)$ is $\mathcal{C}_{n-1}$. Now, let the first step of $P_n$ is $(1,1)$. There are two cases for the number of lattice paths from the cell $(1,1)$ to the cell $(n,n)$.  First, for the next steps, the lattice path $P_n$ never back to the first row, the number of such lattice paths is $\mathcal{C}_{n-1}$. 

Let $P_n$ back to the first row in the $i^{th}$-step. So, the number of perfect lattice paths from the cell $(1,1)$ to the cell $(i-1,2)$ which staying weakly upper the line $y=1$ is $M_{i-3}$. It is remind to calculate the lattice paths from the cell $(i,0)$ to the cell $(n,n)$ of the path $P_n$ that is equal $\mathcal{C}_{n-i+1}$. We have
\begin{eqnarray*}
\mathcal{C}_n&=&\mathcal{C}_{n-1}+\mathcal{C}_{n-1}+\sum_{i=3}^{n}M_{i-3}\mathcal{C}_{n-i+1}\\
&=&2\mathcal{C}_{n-1}+\sum_{i=3}^{n}M_{i-3}\mathcal{C}_{n-i+1},
\end{eqnarray*}
Now, by using of \ref{D(s,1)} we can write
\[\mathcal{C}_{n-1}=M_{n-2}+\sum_{i=3}^{n}M_{i-3}\mathcal{C}_{n-i+1},\]
Then
\[\mathcal{C}_{n-1}=\sum_{i=3}^{n+}M_{i-3}\mathcal{C}_{n-i+1}.\]
\end{proof}
\begin{theorem}\label{columns inner product}
Inside the $m\times n$ table, we have
\[\I_m(n)=\sum_{i=1}^m\mathcal{C}(a,i)\mathcal{C}(b,i)\]
for all $a,b\geq1$ such that $a+b=n+1$. In other words, the inner product of columns $a$ and $b$ equals $\I_m(n)$. In particular, if $n=2k-1$ is odd, then
\[\I_m(n)=\sum_{i=1}^m\mathcal{C}_{k,i}^2.\]
\end{theorem}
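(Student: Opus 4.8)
The plan is to run a ``cut at a column'' argument using the word encoding of perfect lattice paths recalled just before the theorem. There, a perfect lattice path in $T_{m,n}$ is identified with a word $a_1a_2\cdots a_n$ with each $a_x\in\{1,\ldots,m\}$ and $|a_{x+1}-a_x|\le1$, where $a_x$ records the row occupied at column $x$. Under this identification, $\D(s,t)=\sum_{j=1}^m\D^j(s,t)$ counts precisely the words $a_1\cdots a_s$ of length $s$ with free first letter and prescribed last letter $a_s=t$. I would make this reformulation explicit at the outset, since everything else is bookkeeping on words.

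First I would fix a column $a$ with $1\le a\le n$, set $b=n+1-a$ (so that $b\ge 1$ and $a+b=n+1$), and sort the perfect lattice paths by the row $i=a_a$ they occupy at column $a$. For fixed $i$, the prefix $a_1\cdots a_a$ is exactly a word counted by $\D(a,i)$, while the suffix $a_a a_{a+1}\cdots a_n$ is an admissible word of length $n-a+1=b$ whose \emph{first} letter equals $i$ and whose last letter is unconstrained. Because the prefix and suffix agree at column $a$ and together reconstruct the whole path, the number of perfect lattice paths with $a_a=i$ is the product of the number of admissible prefixes and the number of admissible suffixes.

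The one substantive point is that the suffix count equals $\D(b,i)$, and this is where I would invoke the reflection symmetry of the admissibility condition: $|a_{x+1}-a_x|\le1$ is invariant under reversing a word (equivalently, $\u\leftrightarrow\d$ while $\r$ is fixed). Hence reversing the suffix $a_a a_{a+1}\cdots a_n$ is an involution that sends it to an admissible word of length $b$ with free first letter and last letter $i$; by the word description of $\D$ above, such words are counted by $\D(b,i)=\sum_{j=1}^m\D^j(b,i)$. Summing the product $\D(a,i)\D(b,i)$ over all rows $i=1,\ldots,m$ then gives $\I_m(n)=\sum_{i=1}^m\D(a,i)\D(b,i)$, as claimed.

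For the final ``in particular'' clause I would specialize to $n=2k-1$, where $a+b=n+1=2k$ admits the symmetric choice $a=b=k$; substituting yields $\I_m(n)=\sum_{i=1}^m\D(k,i)^2=\sum_{i=1}^m\D_{k,i}^2$. I do not anticipate a real obstacle: the only step requiring care is checking that the reversal genuinely lands in the family enumerated by $\D(b,i)$ — i.e.\ that the free endpoint of the suffix matches the free starting point implicit in $\D(b,i)=\sum_{j=1}^m\D^j(b,i)$, and that no admissibility constraint is lost or double-counted at the shared column $a$. Once the passage to words is in place, this verification is immediate.
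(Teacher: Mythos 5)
Your proposal is correct and takes essentially the same approach as the paper: cut each perfect lattice path at column $a$, count the prefixes ending at row $i$ by $\D(a,i)$ and the (reversed) suffixes by $\D(b,i)$, and sum the products over $i$, specializing to $a=b=k$ when $n=2k-1$. The only difference is presentational — you make explicit the reversal symmetry of the step set and the word encoding, which the paper's terser proof leaves implicit.
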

\begin{proof}
Every perfect lattice path crosses the column $a$ at some row, say $i$. The number of such paths equals the number $\mathcal{C}(a,i)$ of paths from the first column to the $(a,i)$-cell multiplied by the number $\mathcal{C}(n-(a-1)a,i)=\mathcal{C}(b,i)$ of paths from the last column to that cell, from which the result follows.
\end{proof}

\section{Perfect lattice paths in the plane}
In this section, we calculate the number of all perfect lattice paths from $(1,1)$-cell to $(x,y)$-cell in the whole space (not restricted to a table). We denote these lattice paths with $S(x,y)$.
\begin{theorem}\label{S(x,y)}
The number $S(x,y)$ is given by
\[S(x,y)=\sum_{r=0}^{x-1}\binom{x-1}{r}\binom{x-r-1}{\frac{x-y-r}{2}}=\sum_{d=0}^{\lfloor\frac{x-y}{2}\rfloor}\binom{x-1}{d}\binom{x-d-1}{x-y-2d}.\]
\end{theorem}
\begin{proof}
Let $P$ be a lattice path starting from the $(1,1)$-cell and ending at the $(s,t)$-cell which uses three steps $\u,\r,\d$. Then $P$ is equivalent to a word on $\u,\r,\d$ satisfying
\[|\u|_P+|\r|_P+|\d|_P=x-1\quad\text{and}\quad|\u|_P-|\d|_P=y-1,\]
which implies that $|\r|_P+2|\d|_P=x-1$. The first equality follows from choosing first $r$ letter $\r$ among $x-1$ letters and then choosing $d=(x-y-r)/2$ letter $\d$ from $x-r-1$ remaindered letters, while the second equality follows from choosing first $d$ letter $\d$ among $x-1$ letter and then choosing $r=x-y-2d$ letter $\r$ from $x-d-1$ remaindered letters.
\end{proof}

Let $T=T_{m,n}$ and $S_{(a,b)}(x,y)$ denote the number of all perfect lattice path from $(a,b)$-cell to $(x,y)$-cell without leaving the table $T$ .
\begin{table}[h!]
\begin{center}
\begin{tabular}{|c|c|c|c|c|c|c|}
\hline
$6$&&&&&&$1$\\
\hline
$5$&&&&&$1$&$5$\\
\hline
$4$&&&&$1$&$4$&$15$\\
\hline 
$3$&&&$1$&$3$&$10$&$30$\\
\hline
$2$&&$1$&$2$&$6$&$16$&$45$\\
\hline 
$1$&$1$&$1$&$3$&$7$&$19$&$51$\\
\hline
$0$&&$1$&$2$&$6$&$16$&$45$\\
\hline
$-1$ &&&$1$&$3$&$10$&$30$\\
\hline
$-2$&&&&$1$&$4$&$15$\\
\hline
$-3$&&&&&$1$&$5$\\
\hline
$-4$&&&&&&$1$\\
\hline
\end{tabular}
\end{center}
\caption{Some values of $S(x,y)$ }
\label{Table:S}
\end{table}

As before one can compute $S_{(a,b)}(x,y)$ by subtracting the number of all those paths starting from $(a,b)$-cell and ending at $(x,y)$-cell and leave the table from the total number of such paths. We have
\begin{theorem}\label{S_(a,b)(x,y)}
Inside the $m\times n$ table, we have
\begin{align*}
S_{a,b}(x,y)=&S(x-a+1,y-b+1)\\
&-\sum_{x'=a+b}^{x-y}\mathcal{C}^1(x'-a,b)S(x-x'+1,y+1)\\
&-\sum_{x'=m+a-b+1}^{x+y-m-1}\mathcal{C}^1(x'-a,m-b+1)S(x-x'+1,m-y),
\end{align*}
where $a\leq x$ and $b\leq y$.
\end{theorem}
\begin{proof}
Le $T:=T_{m,n}$. Clearly, the number of $\{\u,\r,\mathcal{C}\}$-paths starting from $(a,b)$-cell and ending at $(x,y)$-cell equals $S(x-a+1,y-b+1)$ after a suitable shift. Now we count the number of those paths leaving $T$. Every such path leaves $T$ from the first row or the last row. Suppose a path leaves $T$ at $(x',0)$-cell for the first times. Clearly, $a+b\leq x'\leq x-y$, and the number of such paths equals the number of paths inside $T$ (in reverse direction) starting from $(x'-1,1)$-cell and ending at $(a,b)$-cell (namely, $\mathcal{C}^1(x'-a,b)$) multiplied by the number of paths starting at $(x',0)$ and ending at $(x,y)$-cell (namely, $S(x-x'+1,y+1)$). A similar argument shows that if a path $P$ leaves $T$ at $(x',m+1)$ for the first times, then $m+a-b+1\leq x'\leq x+y-m-1$and the number of those paths equals $\mathcal{C}^1(x'-a,m-b+1)S(x-x'+1,m-y)$, as required.
\end{proof}
\begin{example}
Utilizing Table \ref{Table:S} and Theorems \ref{S(x,y)} and \ref{S_(a,b)(x,y)}, we observe that, inside the $8\times 8$ table,
\begin{align*}
S_{(2,1)}(7,3)=&S(6,3)\\
&-\sum_{x'=3}^4\mathcal{C}^1(x'-2,1)S(8-x',4)-\sum_{x'=10}^1\mathcal{C}^1(x'-2,8)S(8-x',5)\\
=&S(6,3)-\mathcal{C}^1(1,1)S(5,4)-\mathcal{C}^1(2,1)S(4,4)\\
=&30-1\times 4-1\times 1=25.
\end{align*}
\end{example}

Using Theorem \ref{S_(a,b)(x,y)}, we can obtain a formula for $\I_m(n)$ in the case where $m+3\leq n\leq 2m+5$.
In the following, we obtain $\I_m(n)$ in the more general case that $n\geq 2m$.
\begin{theorem}
Inside table $T_{m,km+j}$, where $0\leq j\leq m-1$ and $k\geq2$, we have
\[\mathcal{I}_m(km+j)=\sum_{1\leq h_1,h_2,\ldots,h_k \leq m}\prod _{t=1}^{k-1} S_{(t,h_t)}(m+1,h_{t+1})\mathcal{C}(m,h_1)\mathcal{C}(j+1,h_k).\]
where, $k\leq \sum_{i=1}^k h_i\leq km ;\quad 1\leq h_i\leq m$.  
\end{theorem}
\begin{proof}
For positive integers $k$ and $0\leq j\leq m-1$, let $T$ be a table with $m$ rows and $n=km+j$ columns. For $0\leq i\leq m$, let $h_i$ be a cells in column $m$. The number of all perfect lattice paths from the first columns to the cell $(i,m)$ is equal $\mathcal{C}(m,h_1)$ and the number of perfect lattice paths from the cells $(h_k,mk)$ to the last column is equal $\mathcal{C}(j+1,h_k)$, for positive integers $k$  $k$ and $0\leq j\leq m-1$.
Now, the number of ways to arrive from the cells $h_i,i\times m)$ to the cells $(h_{i+1},(i+1)m)$ is $S_{(i,h_i)}(m+1,h_{i+1})$. So, the number of all perfect lattice paths from the first column to the last columns in the $T$ with $m$ rows and $n=km+j$ columns is equal
\[\mathcal{I}_m(km+j)=\sum_{1\leq h_1,h_2,\ldots,h_k \leq m}\mathcal{C}(m,h_1)\mathcal{C}(j+1,h_k)\prod _{t=1}^{k-1} S_{(t,h_t)}(m+1,h_{t+1}).\]
\end{proof}

\end{document}